\newcommand\C{{\mathbb C}}
\newcommand\Q{{\mathbb Q}}
\newcommand\Z{{\mathbb Z}}
\newcommand\OO{{\mathcal O}}
\newcommand\ord{\mathrm{ord}}
\newcommand\h{\mathrm{h}}
\newcommand\al{\alpha}
\newcommand\be{\beta}
\newcommand\ga{\gamma}
\newcommand\p{\mathfrak{p}}
\newcommand\Arg{\mathrm{Arg}}
\newtheorem{theorem}{Theorem}[section]
\newtheorem{lemma}[theorem]{Lemma}
\theoremstyle{remark}
\numberwithin{equation}{section}
\def\house#1{{%
    \setbox0=\hbox{$#1$}
    \vrule height \dimexpr\ht0+1.4pt width .5pt depth \dp0\relax
    \vrule height \dimexpr\ht0+1.4pt width \dimexpr\wd0+2pt depth \dimexpr-\ht0-1pt\relax
    \llap{$#1$\kern1pt}
    \vrule height \dimexpr\ht0+1.4pt width .5pt depth \dp0\relax}}
\begin{document}

\title[The Skolem Problem for linear recurrence sequences]{Effective results on the Skolem Problem for linear recurrence sequences}

\author{Min Sha}
\address{Department of Computing, Macquarie University, Sydney, NSW 2109, Australia}
\email{shamin2010@gmail.com}

\subjclass[2010]{11B37, 11C08, 11G50, 11J86}

\keywords{Linear recurrence sequence, the Skolem Problem, height, linear form in logarithms}

\begin{abstract}
In this paper, given a simple linear recurrence sequence of algebraic numbers, which has either a dominant characteristic root 
or exactly two characteristic roots of maximal modulus, 
we give some explicit lower bounds for the index beyond which every term of the sequence is non-zero. 
 It turns out that this case covers almost all such sequences whose coefficients are rational numbers. 
\end{abstract}

\maketitle

\section{Introduction}

\subsection{Background and motivation}

Linear recurrence sequences (LRS) appear almost everywhere in mathematics and computer science, and they have been studied for a very long time; see \cite{EPSW} for a deep and extensive introduction.
In this paper, we focus on the \textit{Skolem Problem}, which asks whether there is a zero term in a given LRS.

As usual, let $\bar{\Q}$ be the field of all algebraic numbers, which is an algebraic closure of the rational numbers $\Q$. 
Recall that an LRS of \textit{order} $m\ge 1$ is a sequence $\{u_n\}_{n= 0}^{\infty}$ with elements in $\bar{\Q}$ satisfying a recurrence relation
\begin{equation}
\label{sequence}
u_{n+m}=a_{m-1}u_{n+m-1}+\cdots+a_0u_n \quad (n=0,1,2,\ldots),
\end{equation}
where $a_0,\dots,a_{m-1}\in \bar{\Q}$, $a_0 \ne 0$ and $u_j \ne 0$ for at least one
 $j$ in the range $0 \le j \le m-1$. 
 Here, we call $a_0,\ldots,a_{m-1}$ the \textit{coefficients} of the sequence $\{u_n\}$, and the \textit{initial terms} of $\{u_n\}$ are $u_0,\ldots,u_{m-1}$. 

Several crucial properties of the sequence $\{u_n\}$ rely on its 
 \textit{characteristic polynomial}, which is defined as 
\begin{equation*}
\label{polynomial}
f(X)=X^m-a_{m-1}X^{m-1}-\cdots-a_0=\prod_{i=1}^{k}(X-\alpha_i)^{d_i}  \in \bar{\Q}[X]
\end{equation*}
with distinct $\alpha_1,\alpha_2,\ldots,\alpha_k$ (which are called the  \textit{characteristic roots} of the sequence $\{u_n\}$) and $d_i>0$ for $1\le i \le k$. Then, $u_n$ can be expressed as
\begin{equation}
\label{expression}
u_n=\sum_{i=1}^{k}f_i(n)\alpha_i^n,
\end{equation}
where $f_i$ is some polynomial of degree at most  $d_i-1$ ($i=1,2,\ldots,k$). We call the sequence $\{u_n\}$ \textit{simple} if $k=m$ (that is $d_1=\cdots=d_m=1$) and \textit{non-degenerate} if $\alpha_i/\alpha_j$ is not a root of unity for any $i\ne j$ with $1\le i,j\le k$. 
It is well-known that if $\{u_n\}$ is non-degenerate, then there are only finitely many integers $n$ such that $u_n=0$. 
In fact, it has been shown in \cite{DS1} that almost all integer polynomials are non-degenerate. 

The celebrated Skolem-Mahler-Lech Theorem asserts that the \textit{zero set} $\{n: \,  u_n=0\}$ is the union of a finite set and finitely many arithmetic progressions 
(for instance, see \cite[Theorem 2.1]{EPSW}). 
However, all of its existing proofs are in a non-constructive manner.  
Berstel and Mignotte \cite{BM} showed how to obtain all the arithmetic progressions effectively mentioned in the theorem. 
So, it remains to decide the finite part of the zero set, where one must decide whether the finite part is empty or not. 
The \textit{Skolem Problem}, posed in 1930s,  asks whether it is algorithmically decidable that there exists some $n$ such that $u_n=0$. 

There are only few results towards the decidability of the Skolem Problem. For such sequences of order
1 and 2, this problem is relatively straightforward.
Decidability for LRS over $\bar{\Q}$  of orders 3 and 4 is
independently settled positively by Mignotte, Shorey and Tijdeman \cite{MST}, as well as Vereshchagin \cite{Ver}. 
More recently,  the decidability of  the Skolem Problem for integer LRS of order 5 was claimed in \cite{HHHK}, 
and the decidability for rational LRS of any order was claimed in \cite{Litow}, but as pointed out in \cite{OW12}, both are incorrect. 
The Skolem Problem is
also listed as an open problem and discussed by Tao \cite[Section 1.9]{Tao}; see also \cite{OW12} for a survey. 
To taste the difficulty of the problem, we want to point out that Blondel and Portier \cite[Corollary 2.1]{BP} showed that it is NP-hard to decide whether a given integer LRS has a zero. 

Most recently, when the order of $\{u_n\}$ is 2, 3, or 4,  Chonev, Ouaknine and  Worrell \cite[Theorem 2.1]{COW} gave an effective (not explicit) lower bound $N$, 
which roughly is a polynomial function of its coefficients and initial terms, such that $u_n \ne 0$ for any $n > N$; see also \cite[Theorem 19]{Chonev} for a more clear version.   

In this paper, we want to obtain an explicit version for such an upper bound $N$ when the sequence $\{u_n\}$ is simple  
and it has either a dominant characteristic root or exactly two characteristic roots of maximal modulus  (but we don't restrict its order). 
This can be viewed as an explicit version of partial results in \cite[Corollary 1]{MST}.  
 It turns out that this case covers almost all LRS of algebraic numbers whose coefficients are rational numbers.

 \subsection{Main results}
 
 We now present the main results and discuss briefly their proofs and coverage. 
 
For any polynomial  $f(X) \in \bar{\Q}[X]$ of degree $m$, 
let $\delta_f$ be the smallest positive integer such that all the coefficients of the polynomial $\delta_f f(X)$ are algebraic integers. 
Denote $\delta_f f(X)$ by $f^*(X)$ and write 
\begin{equation}    \label{eq:f*}
f^*(X)=\sum_{i=0}^{m}a_i^*X^{i}. 
\end{equation}
For any non-zero $\alpha \in \bar{\Q}$, let $\h(\alpha)$ be the (Weil) absolute logarithmic height of $\alpha$. 
Let $e$ be the base of the natural logarithm. 

\begin{theorem}
\label{thm:dom}
Let $\{u_n\}$ be a simple LRS of algebraic numbers defined by \eqref{sequence} of order $m\ge 2$, and let $f(X)$ be its characteristic polynomial. 
Suppose that $f(X)$ has a dominant root. 
Let $d$ be the degree of the Galois closure of the field $\Q(a_0,a_1,\ldots, a_{m-1})$ over $\Q$, 
 and let $D$ be the degree of the number field generated by $u_0,\ldots,u_{m-1}$ over $\Q$.  
Let  $f^*(X)=\sum_{i=0}^{m}a_i^*X^{i}$ be defined as in \eqref{eq:f*}, and let 
$$
I(f^*) = 2^{dm}(m+1)^{d/2} \prod_{i=0}^{m} \exp(d\h(a_i^*)) 
$$
and 
$$
J(f^*)=2^{dm(dm-1)/4}(dm+1)^{-d^3m^3/4+3dm/4-7}I(f^*)^{-d^3m^3/2+d^2m^2+dm/2-11}. 
$$
Denote 
$$
B(u)=m! \cdot d D\Big( \sum_{i=0}^{m-1}\h(u_{i})+3dm^2\sum_{i=0}^{m}\h(a_i^*)+3dm^2\log(m+1)\Big).
$$
Then, if $n > N_1(u)$, we have $u_n \ne 0$, where 
$$
N_1(u) = (2B(u) + \log m)(1+H(f))J(f^*)^{-1}. 
$$
If furthermore $f$ is a real polynomial, then in the lower bound $N_1(u)$, $J(f^*)$ can be replaced by 
$$
 2^{-dm(dm-1)(dm-2)/2} (dm+1)^{-dm(dm-1)-1/2}I(f^*)^{-2dm(dm-1)-1}.  
$$
\end{theorem}

For the lower bound $N_1(u)$ in Theorem \ref{thm:dom}, if we fix $f$ (that is, fixing the coefficients $a_0,a_1,\ldots, a_{m-1}$), then we have 
$$
N_1(u) \ll_f D \Big(\sum_{i=0}^{m-1}\h(u_{i}) + 1 \Big).  
$$
Here, we use the Vinogradov symbol $\ll$. Recall that the
assertion $U \ll V$  is equivalent to the inequality $|U|\le cV$ with some absolute constant $c>0$.
  To emphasise the dependence of the implied
constant $c$ on some parameter $\rho$, we write  $U \ll_{\rho} V$.

\begin{theorem}
\label{thm:nondom}
Let $\{u_n\},f,f^*,d, D,I(f^*), J(f^*), B(u)$ be defined as in Theorem \ref{thm:dom}. 
Suppose that $f$ has exactly two roots of maximal modulus, 
and moreover their quotient is not a root of unity. 
Denote 
$$
C(u) = 2^{40} (m!\cdot dD)^2 \pi (2B(u)+\pi) \log (I(f^*)) \log(m! \cdot edD). 
$$
Then, if $n > N_2(u)$, we have $u_n \ne 0$, where 
$$
N_2(u) = 4C(u)I(f^*)J(f^*)^{-1} \log \big( 2C(u)I(f^*)J(f^*)^{-1}\big). 
$$
\end{theorem}

For the lower bound $N_2(u)$ in Theorem \ref{thm:dom}, fixing $f$, we have 
$$
N_2(u) \ll_f D^3 (\log(D+1))^2 \Big( \sum_{i=0}^{m-1}\h(u_{i}) + 1\Big) \log \Big(\sum_{i=0}^{m-1}\h(u_{i}) + 2 \Big) .  
$$

We will prove Theorems~\ref{thm:dom} and \ref{thm:nondom} in Sections~\ref{sec:dominant} and \ref{sec:two} respectively 
after making some preparations in Section~\ref{sec:basic}.  
The approach of the proofs is straightforward. 
For any simple LRS $\{u_n\}$ of order $m$, as in \eqref{expression} we can write 
$u_n = \sum_{j=1}^{m} b_j \alpha_j^n$, 
then we try to find a lower bound for the index beyond which the absolute value of the part of the summation 
related to the roots of maximal modulus is greater than the absolute value of the rest of the summation. 
For this, we need to obtain lower bounds on separating the absolute values $|\alpha_1|, \ldots, |\alpha_m|$, 
and estimate the sizes of the coefficients $b_1, \ldots, b_m$. 
Especially, when there are two characteristic roots of maximal modulus, 
we need to employ Matveev's bound on linear forms in the logarithms of algebraic numbers. 

Finally, we say something about the coverage of the main results. 

By \cite[Theorem 1.1]{DS2}, almost all monic integer polynomials in $\Z[X]$ have a dominant root. 
In other words, Theorem~\ref{thm:dom} covers almost all the linear recurrence sequences of algebraic numbers 
whose coefficients are rational integers. 

Besides, by \cite[Theorem 4]{DS1} and \cite[Theorem 1.1]{DS3}, almost all integer polynomials in $\Z[X]$ (not necessarily  monic) 
are non-degenerate and have either a dominant root or exactly two roots of maximal modulus. 
Note that each monic polynomials in $\Q[X]$ can become an integer polynomial by multiplying some positive integer. 
So, we can say that Theorem~\ref{thm:dom} and Theorem~\ref{thm:nondom} cover almost all the linear recurrence sequences of algebraic numbers 
whose coefficients are rational numbers.

\section{Preliminaries}
\label{sec:basic}

%In this section, we gather some concepts and results which are used later on.

\subsection{Height and Mahler measure}

Given a polynomial
$$
f(X)=a_mX^m+a_{m-1}X^{m-1}+\cdots+a_0=a_m (X-\al_1)\cdots (X-\al_m) \in \C[X]
$$
of degree $m\ge 1$, we assume that the roots $\al_1,\dots,\al_m$
(listed with multiplicities)
are labelled so that $|\al_1 | \ge |\al_2| \ge \cdots \ge |\al_m |$. In case $|\al_1|=\cdots =|\al_r|>|\al_{r+1} |$, 
we say that \textit{$f$ has exactly $r$ roots of maximal modulus}. 
If $r=1$, we say that $f$ has a \textit{dominant root} (that is, $\alpha_1$).
 Clearly, the dominant root is a real number if $f$ is a real polynomial. 

For the polynomial $f$,  its \textit{length} is defined by 
$$
L(f)=|a_0|+\cdots+ |a_m|,
$$
 its {\it height} by 
 $$
 H(f)=\max_{0 \leq i \leq m} |a_i|,
 $$
and its {\it Mahler measure} by
$$
M(f)=|a_m| \prod_{i=1}^{m} \max\{1,|\al_i|\}.
$$
These quantities are related by the following inequality
\begin{equation}
\label{Mahler}
H(f) 2^{-m} \leq M(f) \leq H(f) \sqrt{m+1}, 
\end{equation}
for instance, see \cite[(3.12)]{Waldschmidt}. 
 If furthermore $f\in \Z[X]$ is square-free, then for any two distinct roots $\al, \be$ of $f$, Mahler's inequality \cite{Mahler} asserts that  
 \begin{equation}   
 \label{Mahler2}
 |\al - \be|  > \sqrt{3} m^{-m/2-1}M(f)^{-m+1}. 
 \end{equation}
 Given another polynomial $g \in \C[X]$, by definition we have 
$$
M(fg)=M(f)M(g). 
$$

 Accordingly, for a non-zero algebraic number $\al$, its \emph{Mahler measure} $M(\al)$ is defined as the Mahler measure of its minimal polynomial $f$ over the integers $\Z$, that is, $M(\al)=M(f)$.

For a number field $K$ of degree $d$ (over $\Q$), we denote by $M_{K}$ the set of all valuations $v$ of $K$ extending the standard infinite and
$p$-adic valuations of the rational numbers $\Q$: $|2|_{v}=2$ if $v\in M_{K}$ is Archimedean, and $|p|_{v}=p^{-1}$ if $v$ extends the $p$-adic valuation of $\Q$. In particular, if the valuation $v$ of $K$ corresponds to a prime ideal $\p$ of $K$ lying above a prime number $p$, we also denote the valuation $|\,\,|_v$ by $|\,\,|_\p$, then for any $\al \in K$ we have
$$
|\al|_\p = p^{-\ord_\p(\al)/e_{\p}},
$$
where $\ord_\p(\al)$ is the exponent of $\p$ appearing in the prime decomposition of the fractional ideal $\al \OO_K$, $\OO_K$ is the ring of integers of $K$,  and $e_\p$ is the ramification index of $\p$ over $p$.
For any $v\in M_{K}$, let $K_{v}$ be the completion of $K$ with respect to the valuation $v$, and
let $d_{v}=[K_{v}:\Q_{v}]$ be the local degree of $v$. When the valuation $v$ corresponds to a prime ideal $\p$ lying above a prime number $p$, we also denote $K_v$ by $K_\p$ and $\Q_v$ by $\Q_p$, respectively. 

For the above number field $K$, the \emph{(Weil) absolute logarithmic height} of any non-zero $\alpha\in K$ is
defined by
\begin{equation}
\label{Weil}
\h(\alpha)=d^{-1}\sum_{v\in M_{K}}d_{v}\log\max\{1, |\alpha|_{v}\}.
\end{equation}
 Moreover, if $\alpha$ is of degree $d$, we have
\begin{equation}
\label{Mahler-Weil}
 \h(\al)= d^{-1} \log M(\al); 
\end{equation}
see \cite[Lemma 3.10]{Waldschmidt}. 

Given non-zero $\al \in K$, in view of \eqref{Weil} and $\h(\al)=\h(\al^{-1})$, for any valuation $v\in M_K$ we have 
\begin{equation}
\label{Liouville}
| \log |\al |_v | \le d\h(\al)/d_v \le d\h(\al).
\end{equation}

In the sequel, we  use the following formulas without special reference (see, e.g., \cite{Waldschmidt}). For any $n\in \mathbb{Z}$ and $\be_{1},\cdots,\be_{k},\gamma\in \bar{\mathbb{Q}}$, we have
\begin{align*}
&\h(\be_{1}+\cdots+\be_{k})\le \h(\be_{1})+\cdots+\h(\be_{k})+\log k,\\
&\h(\be_{1}\cdots \be_{k})\le \h(\be_{1})+\cdots+\h(\be_{k}),\\
&\h(\gamma^{n})=|n|\h(\gamma),\\
&\h(|\ga|) \le \h(\ga), \\
& \h(\zeta)=0 \textrm{\quad for any root of unity $\zeta\in \bar{\Q}$}.
\end{align*}

We also need the following result, which is exactly~\cite[Lemma 3.7]{Waldschmidt}.

\begin{lemma}
\label{antra}
Let $f \in \Z[X_1,\ldots,X_n]$ be a non-zero polynomial in $n$ variables.  Then, for any algebraic numbers $\ga_1,\ldots,\ga_n$, we have
$$
\h(f(\ga_1,\ldots,\ga_n)) \leq \log L(f)+\sum_{i=1}^n \h(\ga_i) \deg_{X_i} f,
$$
where $\deg_{X_i} f$ is the partial degree of $f$ with respect to $X_i$.
\end{lemma}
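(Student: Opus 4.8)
The plan is to bound $\h(f(\ga_1,\ldots,\ga_n))$ place by place over a number field $K$ containing $\ga_1,\ldots,\ga_n$, starting from the definition \eqref{Weil} of the absolute logarithmic height. Expand $f$ into monomials, $f(x_1,\ldots,x_n)=\sum_{\mathbf j}c_{\mathbf j}\,x_1^{j_1}\cdots x_n^{j_n}$, where $\mathbf j=(j_1,\ldots,j_n)$ ranges over multi-indices with $0\le j_i\le\deg_{x_i}f$ and each $c_{\mathbf j}\in\Z$; then $f(\ga_1,\ldots,\ga_n)=\sum_{\mathbf j}c_{\mathbf j}\,\ga_1^{j_1}\cdots\ga_n^{j_n}$. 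For every $v\in M_K$ I would estimate $\max\{1,|f(\ga_1,\ldots,\ga_n)|_v\}$ in terms of $\prod_{i=1}^n\max\{1,|\ga_i|_v\}^{\deg_{x_i}f}$, treating the archimedean and non-archimedean places separately.

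For an archimedean $v$, the triangle inequality together with $|c_{\mathbf j}|_v=|c_{\mathbf j}|$ (since $c_{\mathbf j}\in\Z$) gives
$$
|f(\ga_1,\ldots,\ga_n)|_v\le\sum_{\mathbf j}|c_{\mathbf j}|\prod_{i=1}^n|\ga_i|_v^{j_i}\le L(f)\prod_{i=1}^n\max\{1,|\ga_i|_v\}^{\deg_{x_i}f},
$$
and since $L(f)\ge1$ this bound also controls $\max\{1,|f(\ga_1,\ldots,\ga_n)|_v\}$. For a non-archimedean $v$, the ultrametric inequality and $|c_{\mathbf j}|_v\le1$ (again since $c_{\mathbf j}\in\Z$) give
$$
|f(\ga_1,\ldots,\ga_n)|_v\le\max_{\mathbf j}\prod_{i=1}^n|\ga_i|_v^{j_i}\le\prod_{i=1}^n\max\{1,|\ga_i|_v\}^{\deg_{x_i}f},
$$
so at the finite places no factor $L(f)$ occurs. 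Substituting both estimates into $\h(f(\ga_1,\ldots,\ga_n))=d^{-1}\sum_{v\in M_K}d_v\log\max\{1,|f(\ga_1,\ldots,\ga_n)|_v\}$, the archimedean places contribute at most $d^{-1}\big(\sum_{v\mid\infty}d_v\big)\log L(f)=\log L(f)$ because $\sum_{v\mid\infty}d_v=d$, while the remaining terms, summed over all places, are at most $\sum_{i=1}^n(\deg_{x_i}f)\,d^{-1}\sum_{v\in M_K}d_v\log\max\{1,|\ga_i|_v\}=\sum_{i=1}^n(\deg_{x_i}f)\,\h(\ga_i)$ by \eqref{Weil} again. Adding the two pieces yields the stated inequality. (If $f(\ga_1,\ldots,\ga_n)=0$ there is nothing to prove, and if some $\ga_i=0$ one simply discards the monomials involving $x_i$ first.) Equivalently, the two local estimates can be abstracted into the remark that $\h(c_1\omega_1+\cdots+c_N\omega_N)\le\log(|c_1|+\cdots+|c_N|)+\sum_{\ell=1}^N\h(\omega_\ell)$ for $c_\ell\in\Z$ and algebraic $\omega_\ell$, applied to the monomials $\omega_{\mathbf j}=\ga_1^{j_1}\cdots\ga_n^{j_n}$, whose heights satisfy $\h(\omega_{\mathbf j})\le\sum_i j_i\h(\ga_i)\le\sum_i(\deg_{x_i}f)\h(\ga_i)$ by the multiplicativity and power formulas for $\h$.

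I do not expect a genuine obstacle: the argument is a routine application of the definition of the Weil height. The one point that must be handled correctly is the dichotomy between archimedean and non-archimedean places — it is exactly the integrality of the coefficients $c_{\mathbf j}$ that makes their contribution vanish at the finite places and leaves a single $\log L(f)$ from the infinite ones — together with keeping the normalizations of the absolute values $|\cdot|_v$, the local degrees $d_v$, and the global degree $d$ mutually consistent, so that $\sum_{v\mid\infty}d_v=d$ and the identity $d^{-1}\sum_{v\in M_K}d_v\log\max\{1,|\ga_i|_v\}=\h(\ga_i)$ may be used verbatim.
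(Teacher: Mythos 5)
The paper does not prove Lemma~\ref{antra}; it simply cites it as \cite[Lemma 3.7]{Waldschmidt} and uses it as a black box, so there is no in-paper argument to compare against. Your proof is correct and is in fact the standard place-by-place argument that Waldschmidt himself gives: you expand $f$ into monomials, bound $\max\{1,|f(\ga_1,\ldots,\ga_n)|_v\}$ at each $v\in M_K$ by $\prod_i\max\{1,|\ga_i|_v\}^{\deg_{x_i}f}$ with an extra factor $L(f)$ only at the archimedean places (where the triangle inequality is used) and no extra factor at the non-archimedean places (where the ultrametric inequality and $|c_{\mathbf j}|_v\le1$ for $c_{\mathbf j}\in\Z$ apply), and then sum, using $\sum_{v\mid\infty}d_v=d$ to collect the $\log L(f)$ term and \eqref{Weil} to collect the $\h(\ga_i)$ terms. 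The key observation that integrality of the coefficients is what confines the $L(f)$ contribution to the infinite places is exactly right, and the normalizations you use ($|\cdot|_v$, $d_v$, $d$) are consistent with the conventions set out in the paper's Section on the Weil height. The parenthetical remarks about $f(\ga_1,\ldots,\ga_n)=0$ and $\ga_i=0$ handle the only degenerate cases, and the alternative packaging via $\h(c_1\omega_1+\cdots+c_N\omega_N)\le\log(\sum_\ell|c_\ell|)+\sum_\ell\h(\omega_\ell)$ is equivalent. No gap.
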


 \subsection{Absolute root separation}
 
 Mahler has given a celebrated result in \cite{Mahler} on separating distinct roots of a polynomial in $\Z[X]$.   
 For our purpose, we need a result on separating the absolute values of the roots of a polynomial with coefficients as algebraic integers. 

The following lemma is a classical result due to Cauchy; see  \cite[Proposition 2.5.9]{Mignotte}.

\begin{lemma}
\label{Cauchy}
Let $f(X) \in \C[X]$ be a polynomial of degree $m \ge 1$ defined by
$$
f(X)=a_mX^m+a_{m-1}X^{m-1}+\cdots+a_0,  \quad a_m \ne 0. 
$$
Then, for any root $z$ of $f$, we have 
\begin{equation*}
|z|<1+\frac{1}{|a_m|}\max\{|a_0|,\ldots,|a_{m-1}|\}.
\notag
\end{equation*}
\end{lemma}

We reproduce \cite[Lemma 2.4 and Lemma 2.5]{DS2} as follows.

\begin{lemma}
\label{quadratic}
Let $f(X)\in\Z[X]$ be a quadratic polynomial. Suppose that $f$ has two real roots $\al$ and $\be$ with $|\al| \ne |\be|$. Then, we have
$$
||\al|-|\be|| \ge H(f)^{-1}.
$$
\end{lemma}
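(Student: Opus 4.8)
The plan is to write $f(X) = aX^2 + bX + c$ with $a, b, c \in \Z$ and $a \neq 0$, and to analyze the quantity $||\al| - |\be||$ by splitting into cases according to the signs of the two real roots $\al, \be$. First I would record that $\al + \be = -b/a$ and $\al\be = c/a$, and that the discriminant $b^2 - 4ac$ is a nonnegative integer (since the roots are real).

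The key observation is that $(|\al| - |\be|)^2 = \al^2 + \be^2 - 2|\al\be| = (\al+\be)^2 - 2\al\be - 2|\al\be|$. If $\al$ and $\be$ have the same sign, then $\al\be > 0$, so $|\al\be| = \al\be$, giving $(|\al|-|\be|)^2 = (\al+\be)^2 - 4\al\be = (b^2 - 4ac)/a^2$; the discriminant is a positive integer in this case (it is nonzero because $|\al| \neq |\be|$ forces $\al \neq \be$), hence $(|\al|-|\be|)^2 \ge 1/a^2 \ge H(f)^{-2}$, and taking square roots gives the claim. If $\al$ and $\be$ have opposite signs, then $\al\be < 0$, so $|\al\be| = -\al\be$, and $(|\al|-|\be|)^2 = (\al+\be)^2 = b^2/a^2$; here $b \neq 0$ (since $|\al| \neq |\be|$ rules out $\al = -\be$, i.e.\ $b = 0$), so $b^2 \ge 1$ and again $(|\al|-|\be|)^2 \ge 1/a^2 \ge H(f)^{-2}$. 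One degenerate subcase remains: if one root is zero, say $\be = 0$, then $c = 0$, and $||\al| - |\be|| = |\al| = |b/a| \ge 1/|a| \ge H(f)^{-1}$ provided $b \neq 0$, while if $b = 0$ too then $f = aX^2$ has both roots zero, contradicting $|\al| \neq |\be|$.

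In every case the lower bound reduces to a nonzero integer (either the discriminant $b^2 - 4ac$, or $b^2$, or $b^2$ again) being at least $1$, divided by $a^2$, and then using $|a| \le H(f)$. I do not anticipate a genuine obstacle here; the only point requiring care is the bookkeeping of the sign cases and the degenerate subcases, and making sure that the hypothesis $|\al| \neq |\be|$ is invoked exactly where it is needed (to guarantee $\al \neq \be$ in the same-sign case and $\al \neq -\be$ in the opposite-sign case). This matches the standard proof of the Mahler-type separation bound specialized to quadratics, and it is what one would expect from \cite[Lemma 2.4]{DS2}.
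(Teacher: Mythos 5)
Your proof is correct and complete. The paper itself does not supply a proof of this lemma but merely cites it from \cite{DS2} (``We reproduce [Lemma~2.4 and Lemma~2.5]{DS2}''), so there is no in-text argument to compare against; nevertheless your case analysis, split according to the sign of $\al\be$ and with the degenerate subcase $\al\be = 0$ handled separately, reduces $(|\al|-|\be|)^2$ to $(\al-\be)^2 = (b^2-4ac)/a^2$ or $(\al+\be)^2 = b^2/a^2$ respectively, correctly identifies the nonzero integer numerator in each case via the hypothesis $|\al|\neq|\be|$, and closes with $|a|\le H(f)$. This is exactly the standard elementary argument one would expect behind the cited Lemma~2.4 of \cite{DS2}, and I see no gap.
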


\begin{lemma}
Let $f(X)\in\Z[X]$ be a polynomial of degree $m \ge 2$, and let $\alpha$ and $\beta$ be two roots of $f$ satisfying $|\alpha| \ne |\beta|$. Then,
\begin{equation}
\label{distance1}
\left||\alpha|-|\beta|\right|>
2^{m(m-1)/4}(m+1)^{-m^3/4+3m/4-3}H(f)^{-m^3/2+m^2+m/2-2}
\end{equation}
if both $\al$ and $\be$ are non-real.
If, furthermore, $\al$ is real and $\be$ is non-real, then
\begin{equation}
\label{distance2}
\left||\alpha|-|\beta|\right| \ge 2^{-m(m-1)(m-2)/2} (m+1)^{-m(m-1)-1/2}H(f)^{-2m(m-1)-1}.
\end{equation}
Finally, if both $\al$ and $\be$ are real, then
\begin{equation}
\label{distance3}
||\alpha|-|\beta||>(2m+1)^{-3m}H(f)^{2-4m}.
\end{equation}
\end{lemma}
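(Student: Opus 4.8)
All three inequalities will be derived from one template: write $\bigl||\alpha|-|\beta|\bigr|$, up to division by the harmless quantity $|\alpha|+|\beta|<2(1+H(f))$ (which Lemma~\ref{Cauchy} bounds, since every root of $f$ has modulus $<1+H(f)/|a_0|\le 1+H(f)$), either as the distance between two distinct roots of, or as a nonzero value of, a polynomial in $\Z[X]$ manufactured from $f$, and then invoke a lower bound whose dependence on the height is \emph{linear} in the degree of that auxiliary polynomial. Three different auxiliary polynomials are needed because for a real root $\gamma$ the number $|\gamma|=\pm\gamma$ is again a root of an obvious integer polynomial, whereas for a non-real $\gamma$ one must pass to $|\gamma|^2=\gamma\overline{\gamma}$, a product of two roots of $f$ (legitimate as $f\in\R[X]$, so $\overline{\gamma}$ is a root too).

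\textbf{The three constructions.} (a) \emph{Both real.} Then $|\alpha|,|\beta|\in\{\pm\alpha,\pm\beta\}$ are two distinct roots of $g(X):=f(X)f(-X)\in\Z[X]$, of degree $2m$, with $M(g)=M(f)^2$ and $L(g)\le L(f)^2$. A root-separation estimate of the shape $|\rho_1-\rho_2|>c(n)\,M(P)^{-(n-1)}$ for distinct roots of $P\in\Z[X]$ of degree $n$ (got from the discriminant of the squarefree part of $P$ being a nonzero integer, the classical upper bound for $|\mathrm{disc}(P)|$ in terms of $\deg P$ and the coefficients, and Lemma~\ref{Cauchy}), applied to $g$ and combined with \eqref{Mahler}--\eqref{Landau}, yields \eqref{distance3}: the exponent $2-4m=-2(2m-1)$ is the exponent $-(2m-1)$ of a degree-$2m$ polynomial, doubled because $M(g)=M(f)^2$. (b) \emph{Both non-real.} With $\gamma_1,\dots,\gamma_m$ the roots of $f$, the numbers $|\alpha|^2=\alpha\overline{\alpha}$ and $|\beta|^2=\beta\overline{\beta}$ are distinct roots (distinct since $|\alpha|\ne|\beta|$) of $C(Y):=\prod_{1\le i<j\le m}(Y-\gamma_i\gamma_j)$; a suitable power of $a_0$ makes $C\in\Z[Y]$, of degree $\binom m2$, and the grouping $\max\{1,|\gamma_i\gamma_j|\}\le\max\{1,|\gamma_i|\}\max\{1,|\gamma_j|\}$ (each index occurring in $m-1$ pairs) gives $M(C)\le M(f)^{m-1}$, the powers of $a_0$ cancelling. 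The same root-separation bound applied to $C$ lower-bounds $\bigl||\alpha|^2-|\beta|^2\bigr|$, and dividing by $|\alpha|+|\beta|<2(1+H(f))$ gives \eqref{distance1}, whose exponent $-m^3/2+m^2+m/2-2$ is exactly $-(m-1)\bigl(\binom m2-1\bigr)-1$. (c) \emph{Mixed} ($\alpha$ real, $\beta$ non-real). Now $|\alpha|^2=\alpha^2$ and $|\beta|^2=\beta\overline{\beta}$ are no longer roots of one polynomial of degree $O(m^2)$; instead $\theta:=\alpha^2-\beta\overline{\beta}\ne0$ is a root of $\psi(Y):=\prod(Y-(\gamma_\ell^2-\gamma_i\gamma_j))$, the product over a Galois-stable family of index triples (one may take $\ell\notin\{i,j\}$ since $\alpha\notin\{\beta,\overline{\beta}\}$), which again lies in $\Z[Y]$ after clearing denominators. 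The crucial point is that, after $\max\{1,|\gamma_\ell^2-\gamma_i\gamma_j|\}\le2\max\{1,|\gamma_\ell|\}^2\max\{1,|\gamma_i|\}\max\{1,|\gamma_j|\}$, each $\max\{1,|\gamma_s|\}$ enters to total power only $O(m^2)$, so $M(\psi)\le 2^{O(m^3)}M(f)^{O(m^2)}$ --- the height appears only quadratically in $m$ in the exponent. Then the elementary inequality $|\theta|\ge M(\psi)^{-1}$ (valid since $\psi\in\Z[Y]$ and $\theta\ne0$: the nonzero constant term of the $Y$-deflated $\psi$ is an integer of absolute value $\ge1$) followed by division by $|\alpha|+|\beta|<2(1+H(f))$ yields \eqref{distance2}, whose exponent is $-2m(m-1)-1$.

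\textbf{Obstacles.} The routine parts are the passages between $H$, $L$, $M$ through \eqref{Mahler}, \eqref{Landau} and, where needed, Lemma~\ref{antra}, together with checking that $C$ and $\psi$ are integral with controlled leading coefficients (using that $a_0\gamma_i$ are algebraic integers). I expect the genuine work to be the root-separation input: one needs an estimate in which the exponent of the height is linear, not quadratic, in the degree --- this is what keeps the final exponents at $O(m^3)$ rather than $O(m^6)$ --- which is available from the discriminant route but only after replacing $g$, $C$, $\psi$ by their squarefree parts (permissible, since an irreducible factor of an integer polynomial has Mahler measure at most that of the polynomial, so the degree can only drop while the bound on the Mahler measure survives). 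What remains is then the bookkeeping, to be done separately in each of the three cases, needed to extract the precise constants displayed in \eqref{distance1}--\eqref{distance3}.
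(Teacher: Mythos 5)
The paper does not actually prove this lemma: it is reproduced verbatim from \cite[Lemma 2.4 and Lemma 2.5]{DS2} with citation, so there is no in-paper proof to compare against. That said, the shape of the three exponents matches your three constructions exactly, which makes it likely your route is essentially the one taken in \cite{DS2}: for (a) the roots $|\alpha|,|\beta|$ of $f(X)f(-X)$ (degree $2m$, $M=M(f)^2$) together with a Mahler-type separation give $H(f)^{2-4m}$; for (b) the roots $\alpha\overline\alpha,\beta\overline\beta$ of $\prod_{i<j}(Y-\gamma_i\gamma_j)$ (degree $\binom m2$, $M\le M(f)^{m-1}$) give, after dividing by $|\alpha|+|\beta|$, the exponent $-(m-1)\bigl(\binom m2-1\bigr)-1=-m^3/2+m^2+m/2-2$; and for (c) the naive bound $|\theta|\ge M(\psi)^{-1}$ on $\theta=\alpha^2-\beta\overline\beta$, with a symmetric auxiliary $\psi$, is exactly what keeps the exponent at $O(m^2)$ rather than $O(m^3)$. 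So your strategy is sound and, up to constants, gives bounds at least as strong as the stated ones.

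The one genuine gap is the step you dismiss as routine: the leading coefficient of $C$ and $\psi$. Writing $a_0^N C\in\Z[Y]$ with $N$ given merely by ``$a_0\gamma_i$ is an algebraic integer'' forces $N=2\binom m2=m(m-1)$, and then $M(a_0^N C)\le|a_0|^{(m-1)^2}M(f)^{m-1}$: the $a_0$-powers do \emph{not} cancel, and the extra $|a_0|^{(m-1)^2}\le H(f)^{(m-1)^2}$ destroys the target exponent of $H(f)$. The same problem is worse for $\psi$: the trivial power $a_0^{m(m-1)(m-2)}$ gives $M(\psi)$ containing $H(f)^{O(m^3)}$, undermining your own remark that the height exponent is $O(m^2)$. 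What actually makes this work is the sharper fact that a symmetric polynomial of degree at most $D$ in each variable is a polynomial of total degree at most $D$ in the elementary symmetric functions; applied to the coefficients of $C$ (each $\gamma_s$ has degree $\le m-1$) and $\psi$ (degree $\le 2(m-1)(m-2)$), this gives $a_0^{m-1}C\in\Z[Y]$ and $a_0^{2(m-1)(m-2)}\psi\in\Z[Y]$, and only then do the $a_0$-powers cancel in $M$. This is a classical but nontrivial input; as written, your proposal names the wrong justification for it, so the bookkeeping you defer would not go through without supplying this.
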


We remark that there is an improvement upon \eqref{distance3} in \cite{BDPS} for real roots under some further conditions. 
Note that for large enough $m$, \eqref{distance3} is better than \eqref{distance2}, and \eqref{distance2} is better than \eqref{distance1}. However, for small integer $m$, this might be not true. 
 For simplicity, we put them together into two uniform forms.

\begin{lemma} 
\label{lem:dist1}
Let $f(X)\in\Z[X]$ be a polynomial of degree $m\ge 2$, and let $\alpha$ and $\beta$ be two roots of $f$ satisfying $|\alpha| \ne |\beta|$. Then,
\begin{equation}
\label{udis1}
\left||\alpha|-|\beta|\right|>
2^{m(m-1)/4}(m+1)^{-m^3/4+3m/4-7}H(f)^{-m^3/2+m^2+m/2-11};
\end{equation}
if furthermore $\al$ is real, then
\begin{equation}
\label{udis2}
\left||\alpha|-|\beta|\right| \ge 2^{-m(m-1)(m-2)/2} (m+1)^{-m(m-1)-1/2}H(f)^{-2m(m-1)-1}.
\end{equation}
\end{lemma}

\begin{proof}
By Lemma \ref{quadratic}, we can assume that $m \ge 3$.    
We first prove \eqref{udis2}. 
Notice that the inequality \eqref{udis2} is the same as \eqref{distance2}. 
For any $m \ge 4$, we have 
$$
2^{-m(m-1)(m-2)/2}(m+1)^{-m(m-1)-1/2} < 2^{-3m} (m+1)^{-3m} < (2m+1)^{-3m}
$$
and $-2m(m-1)-1 < 2-4m$, and so \eqref{distance3} is included in \eqref{udis2} when $m \ge 4$. 
We now consider $m=3$ individually. 
Assume that $f$ has two real roots $\al$ and $\be$ such that  $|\al| \ne |\be|$. 
Then, its third root, say $\gamma$, is also real. 
If $\gamma \ne \pm \al$ and $\gamma \ne \pm \be$, then by \cite[Theorem 1]{BDPS} we have 
$$
||\al|-|\be|| \ge 2^{-5.5}H(f)^{-2}, 
$$
which is certainly included in \eqref{udis2} by setting $m=3$. 
Now, if $\gamma = \pm \al$ or $\gamma = \pm \be$, then the polynomial $f(X)f(-X)$ has a multiple root ($\al$ or $\be$). 
Let $g(X)$ be the squarefree part of $f(X)f(-X)$. Then, we have $\deg g \le 5$. 
Note that $\pm \al$ and $\pm \be$ are real roots of $g$. 
So, the value $||\al|-|\be||$ is in fact equal to the absolute value of the difference of two distinct roots of $g$. 
Thus, applying \eqref{Mahler} and \eqref{Mahler2} to $g$, we obtain 
$$
||\al|-|\be|| > \sqrt{3} \cdot 5^{-3.5} M(g)^{-4} 
\ge  \sqrt{3} \cdot 5^{-3.5} M(f)^{-8} \ge \sqrt{3} \cdot 5^{-3.5} \cdot 2^{-8}H(f)^{-8}, 
$$
which is also included in \eqref{udis2} by setting $m=3$. 
This completes the proof of \eqref{udis2}. 

Now, we want to prove \eqref{udis1}. 
By \eqref{udis2}, we only need to prove that both \eqref{distance1} and \eqref{distance2} are included in \eqref{udis1}. 
Note that \eqref{distance1} is automatically contained in \eqref{udis1}. 
It remains to show that \eqref{distance2} is included in \eqref{udis1}. 
First, for $m=3, 4$ or $5$, by direct computation we have 
$$
-m^3/2+m^2+m/2-11 \le -2m(m-1)-1, 
$$
and for $m \ge 6$, we obtain 
$$
-m^3/2+m^2+m/2-11 \le -2m^2 + m/2 - 11 < -2m(m-1)-1, 
$$
and thus, for any $m\ge 3$ we have 
\begin{equation} 
\label{eq:H1}
H(f)^{-m^3/2+m^2+m/2-11} \le H(f)^{-2m(m-1)-1}.  
\end{equation}
On the other hand, for $m=3, 4, 5$ or $6$, by direct computation we have 
$$
2^{m(m-1)/4}(m+1)^{-m^3/4+3m/4-7} < 2^{-m(m-1)(m-2)/2} (m+1)^{-m(m-1)-1/2}, 
$$
and for any $m\ge 7$, it is easy to see that 
\begin{equation}
\label{eq:2m}
2^{m(m-1)/4}(m+1)^{-m^3/4+3m/4-7} \le 2^{-m(m-1)(m-2)/2} (m+1)^{-m(m-1)-1/2}. 
\end{equation}
Indeed, to obtain \eqref{eq:2m} it is equivalent to show 
$$
2^{m(m-1)(m-2)/2+m(m-1)/4}\le  (m+1)^{m^3/4-3m/4-m(m-1)+13/2}, 
$$
which follows from (note that $m \ge 7$)
\begin{align*}
2^{m(m-1)(m-2)/2+m(m-1)/4} & \le (m+1)^{m(m-1)(m-2)/6+m(m-1)/12} \\ 
& < (m+1)^{m^3/4-3m/4-m(m-1)+13/2}. 
\end{align*}
So, for any $m \ge 3$, we obtain 
\begin{equation}
\label{eq:H2}
2^{m(m-1)/4}(m+1)^{-m^3/4+3m/4-7} \le 2^{-m(m-1)(m-2)/2} (m+1)^{-m(m-1)-1/2}. 
\end{equation}
Hence, combining \eqref{eq:H1} with \eqref{eq:H2}, we deduce that \eqref{distance2} is included in \eqref{udis1}. 
This completes the proof of \eqref{udis1}. 
\end{proof}

Moreover, we can extend the above lemma to polynomials whose coefficients are algebraic integers. 
For this, we need a simple preparation. 

\begin{lemma}
\label{lem:Hg}
Let $f(X) = a_mX^m + \cdots + a_1X + a_0$ be a polynomial of degree $m\ge 2$, where all the coefficients are algebraic integers. 
Let $K$ be a finite Galois extension over $\Q$ containing the field $\Q(a_0,a_1,\ldots, a_m)$. 
Let $d=[K:\Q]$, and let $G$ be the Galois group of $K$ over $\Q$. 
Then, we have 
$$
M(\prod_{\sigma \in G} \sigma(f)) \le (m+1)^{d/2} \prod_{i=0}^{m} \exp(d\h(a_i)), 
$$
and 
$$
H(\prod_{\sigma \in G} \sigma(f)) \le 2^{dm}(m+1)^{d/2} \prod_{i=0}^{m} \exp(d\h(a_i)). 
$$
\end{lemma}

\begin{proof} 
For each $0 \le i \le m$, let $d_i$ be the degree of $a_i$ over $\Q$. 
Using \eqref{Mahler}, we have 
\begin{equation*}
\label{eq:Hg}
\begin{split}
M(\prod_{\sigma \in G} \sigma(f))   = \prod_{\sigma \in G}  M(\sigma(f)) 
& \le  \prod_{\sigma \in G}  \sqrt{m+1}H(\sigma(f)) \\
& = (m+1)^{d/2} \prod_{\sigma \in G}  \max_{0\le i \le m} |\sigma(a_i)| \\
& \le (m+1)^{d/2} \prod_{i=0}^{m} M(a_i)^{d/d_i} \\
& = (m+1)^{d/2} \prod_{i=0}^{m} \exp(d\h(a_i)),
\end{split}
\end{equation*} 
where we also use the assumption that the coefficients $a_0,a_1,\ldots, a_m$ are algebraic integers. 
This completes the proof of the first inequality. 
The second inequality follows from the first one and \eqref{Mahler}.  
\end{proof}

Now, we are ready to extend Lemma~\ref{lem:dist1}. 

\begin{lemma}
\label{lem:dist2}
Let $f(X) = a_mX^m + \cdots + a_1X + a_0$ be a polynomial of degree $m\ge 2$, where all the coefficients are algebraic integers. 
Let $K$ be the Galois closure of the field $\Q(a_0,a_1,\ldots, a_m)$ over $\Q$. 
Put $d=[K: \Q]$, and let $G$ be the Galois group of $K$ over $\Q$. 
Denote 
$$
I(f) = 2^{dm}(m+1)^{d/2} \prod_{i=0}^{m} \exp(d\h(a_i)). 
$$
If $\alpha$ and $\beta$ are two roots of the polynomial $\prod_{\sigma \in G} \sigma(f)$ 
satisfying $|\alpha| \ne |\beta|$, then 
\begin{equation}
\label{eq:udis1}
\left||\alpha|-|\beta|\right|>
2^{dm(dm-1)/4}(dm+1)^{-d^3m^3/4+3dm/4-7}I(f)^{-d^3m^3/2+d^2m^2+dm/2-11};
\end{equation}
if furthermore $\al$ is real, then
\begin{equation}
\label{eq:udis2}
\left||\alpha|-|\beta|\right| \ge 2^{-dm(dm-1)(dm-2)/2} (dm+1)^{-dm(dm-1)-1/2}I(f)^{-2dm(dm-1)-1}.
\end{equation}
\end{lemma}

\begin{proof}
By assumption, the polynomial $g=\prod_{\sigma \in G} \sigma(f)$ is a polynomial in $\Z[X]$. 
Clearly, $\deg g = dm$, because $|G|=d$. 
By Lemma~\ref{lem:Hg}, we have $H(g) \le I(f)$. 
Then, applying Lemma~\ref{lem:dist1} to $g$ we obtain 
the desired results. 
\end{proof}

We remark that in Lemma \ref{lem:dist2}, if the degree of each coefficient $a_i$ over $\Q$ is $d_i$, $i=0,1,\ldots, m$, then we have 
$d \le \prod_{i=0}^{m} d_i !$.

\subsection{Bounding coefficients}

For further deductions, we need to estimate the coefficients in \eqref{expression}
when the sequence $\{u_n\}$ is a simple LRS of algebraic numbers.

\begin{lemma}
\label{lem:hbj}
Let $\{u_n\}$ be a simple LRS of algebraic numbers of order $m \ge 2$  defined by \eqref{sequence}. 
Let $f(X)$ be its characteristic polynomial, 
and define the polynomial $f^*(X)$ as in \eqref{eq:f*}. 
Write $u_n$ as
$$
u_n=\sum_{j=1}^{m}b_j\al_j^n,
$$
where $\al_1,\ldots,\al_m$ are distinct roots of $f$ and all $b_j$ are non-zero. Then, for any $1\le j \le m$ we have
$$
\h(b_j) \le  \sum_{i=0}^{m-1}\h(u_{i})+2m\sum_{k \ne j}\h(\al_k)+m^2\h(\al_j)+m(2m-3)\log 2 + \log m. 
$$
Let $d$ be the degree of the Galois closure of the field $\Q(a_0,a_1,\ldots, a_{m-1})$ over $\Q$.  
Then, we have 
$$
\h(b_j) < \sum_{i=0}^{m-1}\h(u_{i})+3dm^2\sum_{i=0}^{m}\h(a_i^*)+3dm^2\log(m+1). 
$$
\end{lemma}

\begin{proof}
Here, we follow the arguments in the proof of \cite[Theorem 3.1]{DSS}.

Notice that
\begin{equation}
\label{eq:matrix}
(u_0,u_1, \ldots, u_{m-1})
=
(b_1,b_2, \ldots, b_m)
\left( \begin{array}{cccc}
1 & \al_1 & \ldots & \al_1^{m-1} \\
1 & \al_2 & \ldots & \al_2^{m-1} \\
\vdots & \vdots & \ldots & \vdots\\
1 & \al_m & \ldots & \al_m^{m-1} \\
\end{array} \right),
\end{equation}
and $\al_1, \ldots, \al_m$ are distinct.
To solve the above system of $m$ linear equations in $m$ unknowns $b_1,\ldots,b_m$, we denote the appearing Vandermonde matrix by
$V=\left(\al_i^{j-1}\right)_{1 \leq i,j \leq m}$. By~\cite[Formula (6)]{Klinger1967}, the inverse of $V$ is given by $V^{-1}=\big(w_{ij}\big)_{1\le i,j \le m}$, where
$$
w_{ij}=
 \frac{(-1)^{i+j} \sigma_{m-i}(\al_1,\ldots,\widehat{\al_{j}},\ldots,\al_m)}{\prod\limits_{l=1}^{j-1}(\al_{j}-\al_l)\prod\limits_{k=j+1}^{m}(\al_{k}-\al_{j})}
$$
 and $\sigma_{k}(\al_1,\ldots,\widehat{\al_{j}},\ldots,\al_m)$ stands for the $k$-th elementary symmetric function in the $m-1$ variables
$\al_1,\ldots,\al_m$ without $\al_{j}$; for instance, in the case $j=m$, we have
$\sigma_{1}(\al_1,\ldots,\al_{m-1})=\al_1+\cdots+\al_{m-1}$ and $\sigma_{m-1}(\al_1,\ldots,\al_{m-1})=\al_1\cdots \al_{m-1}$.

So, for any $j$ with $1\le j \le m$ we have
\begin{equation}\label{uyt}
 b_{j} = \sum_{i=1}^m u_{i-1} w_{ij}.
 \notag
\end{equation}
Since $\sigma_{m-i}(\al_1,\ldots,\widehat{\al_{j}},\ldots,\al_m)$
is a polynomial with coefficients $1$ in $m-1$ variables
 $\al_1,\ldots,\al_m$ (without $\al_{j}$) of degree $m-i$, length ${m-1 \choose m-i}$, and degree $1$ in each variable $\al_k$, $k \ne j$, by Lemma~\ref{antra} we find that
$$
\h(\sigma_{m-i}(\al_1,\ldots,\widehat{\al_{j}},\ldots,\al_m))
\leq \log {m-1 \choose m-i}+\sum_{k \ne j} \h(\al_k).
$$
On the other hand, we observe that
\begin{align*}
\h\big(\prod_{k \ne j} (\al_k-\al_j)\big) & \le \sum_{k \ne j} \h(\al_k-\al_j) \\
& \le  \sum_{k \ne j} \big( \h(\al_k)+\h(\al_j)+\log 2 \big) \\
& =   \sum_{k \ne j} \h(\al_k)+(m-1)\h(\al_j)+(m-1)\log 2.
\end{align*}
Thus, we obtain
$$
\h(w_{ij})\le 2\sum_{k \ne j} \h(\al_k)+(m-1)\h(\al_j)+ (m-1)\log 2 + \log {m-1 \choose m-i} .
$$

Hence,  for $1\le j \le m$ we conclude that
\begin{equation} 
\label{eq:hbj}
\begin{split}
\h(b_j) & \le \sum_{i=1}^{m} (\h(u_{i-1})+\h(w_{ij}))+\log m\\
&\leq \sum_{i=0}^{m-1}\h(u_{i})+2m\sum_{k \ne j}\h(\al_k)+m(m-1)\h(\al_j) \\
& \qquad +m(2m-3)\log 2 + \log m,
%&< \sum_{i=0}^{m-1}\h(u_{i})+2m\sum_{k \ne j}\h(\al_k)+m^2\h(\al_j)+\frac{3}{2}m^2-\frac{1}{2}m-1,
\end{split}
\end{equation} 
where we also use the fact that the binomial coefficient ${m-1 \choose m-i} \le 2^{m-2}$ for any $1\le i \le m$. 
This gives the first desired upper bound. 

Now, we need to estimate $\h(\al_i)$ for each $1\le i \le m$.  
By definition and using \eqref{Mahler-Weil} and Lemma~\ref{lem:Hg}, we obtain
\begin{equation} 
\label{eq:hai} 
\begin{split}
\h(\al_i) \le \log M(\al_i) & \le \log M(\prod_{\sigma \in G} \sigma(f^*)) \\
& \le d \sum_{i=0}^{m} \h(a_i^*) + \frac{d}{2}\log(m+1). 
\end{split}
\end{equation} 

Finally, combining \eqref{eq:hbj} with \eqref{eq:hai} we have 
\begin{equation*}
\h(b_j) < \sum_{i=0}^{m-1}\h(u_{i})+3dm^2\sum_{i=0}^{m}\h(a_i^*)+3dm^2\log(m+1). 
\end{equation*}
This completes the proof. 
\end{proof}

\subsection{Linear form in the logarithms of algebraic numbers}

One key technical tool in this paper is Baker's inequality on linear form in the logarithms of algebraic numbers. Here we restate one of its explicit forms due to Matveev \cite[Corollary 2.3]{Matveev}.

First, recall that for a non-zero complex number $z$, the principal value of the natural logarithm of $z$ is
$$
\log z = \log |z| + \sqrt{-1} \cdot \Arg(z),
$$
where $\Arg(z)$ is the principal value of the argument of $z$ ($-\pi < \Arg(z) \le \pi$). Note that the definition here coincides with the natural logarithm of positive real numbers.   We also want to indicate that the identity $\log (z_1z_2)=\log z_1 + \log z_2$ can fail in our setting. 

Let
$$
\Lambda = b_1 \log \al_1 + b_2 \log \al_2 + \cdots + b_k \log \al_k,
$$
where $k\ge 2$, $b_1,\ldots,b_k \in \Z$, and $\al_1,\ldots, \al_k$ are non-zero elements of a number field $K$. Let $D=[K:\Q]$ and $B=\max \{ |b_1|,\ldots, |b_k| \}$. For any $1 \le j \le k$,  choose a real number $A_j$ such that
$$
A_j \ge \max \{ D\h(\al_j), |\log \al_j |, 0.16 \}.
$$

Suppose that $\Lambda \ne 0$. Then, we have
\begin{equation}
\label{Baker}
\log |\Lambda| > -2^{6k+20}D^2A_1\cdots A_k \log (eD) \log (eB),
\end{equation}
where $e$ is the base of the natural logarithm.

We remark that we in fact only need a lower bound on linear forms in three logarithms. 
However, all the existing lower bounds on linear forms in three logarithms are under some extra conditions, 
which do not always hold in our case (see, for instance, the best known estimate \cite[Theorem 2]{Mig}).

\section{Proof of Theorem~\ref{thm:dom}}
\label{sec:dominant}

Let $\al_1,\al_2,\ldots,\al_m$ be the roots of $f$  such that $|\al_1|>|\al_j|$ for any $2\le j \le m$. Note that they are all distinct and also the roots of $f^*$. 

Then, by \eqref{eq:udis1} and the definition of $J(f^*)$, for any $2\le j \le m$ we have
\begin{equation}
\label{eq:diff1}
|\al_1|-|\al_j| > J(f^*).
\end{equation}

As mentioned before, for any integer $n\ge 0$, $u_n$ can be expressed as
$$
u_n=\sum_{j=1}^{m}b_j\alpha_j^n,
$$
where $b_1,\ldots,b_m$ are all non-zero complex numbers.
Now, we want to find a lower bound beyond which the index $n$ satisfies 
\begin{equation}
\label{eq:b1a}
|b_1\al_1^n|> \sum_{j=2}^{m}|b_j\alpha_j^n|.
\end{equation}
Then, $u_n \ne 0$ when the index $n$ is greater than this lower bound. This will complete the proof. 
Note that  it is equivalent to require that
$$
|b_1|> \sum_{j=2}^{m}|b_j|\left(|\alpha_j|/|\al_1|\right)^n,
$$
which, by \eqref{eq:diff1}, is implied in the inequality
\begin{equation}
\label{eq:b1}
|b_1| > \left(1-J(f^*)/|\al_1|\right)^n\sum_{j=2}^{m}|b_j|.
\end{equation}

On the other hand, for any $1\le j \le m$, by \eqref{Liouville} we know that
$$
| \log |b_j | |\le [\Q(b_j):\Q]\h(b_j).
$$
Since $b_j \in \Q(u_0,\ldots,u_{m-1},\al_1,\ldots,\al_m)$ by \eqref{eq:matrix}, we have $[\Q(b_j):\Q]\le m! \cdot dD$. So
$$
| \log |b_j | |\le m!\cdot dD \h(b_j).
$$
Using Lemma~\ref{lem:hbj} and by the definition of $B(u)$, we get
$$
| \log |b_j | |\le B(u),
$$
that is
\begin{equation}
\label{eq:|bj|}
\exp(-B(u))\le |b_j| \le \exp(B(u))
\end{equation}
for any $1\le j \le m$.

Thus, by \eqref{eq:|bj|}, the inequality \eqref{eq:b1} is implied in the following inequality
$$
\exp(-B(u)) > m \exp(B(u)) \left(1-J(f^*)/|\al_1|\right)^n,
$$
which is equivalent to
$$
n > \frac{2B(u)+ \log m}{ -\log (1-J(f^*)/|\al_1|)}.
$$
By Lemma \ref{Cauchy}, we have $|\al_1| < 1+ H(f)$. So,  it suffices to ensure that
$$
n > \frac{2B(u)+ \log m}{ -\log (1-J(f^*)/(1+H(f)))}.
$$
Using the Taylor expansion $-\log (1-x)=x+x^2/2+x^3/3+ \cdots$ for $|x|<1$, it suffices to require that
\begin{equation*}
n > \frac{2B(u)+ \log m}{ J(f^*)/(1+H(f))}.
\end{equation*} 
Thus, we get the desired lower bound $N_1(u)$ implying the inequality \eqref{eq:b1a}. 
This completes the proof of the first part. 

Finally, if $f$ is a real polynomial, then its dominant root $\alpha_1$ is a real root, and so in the inequality \eqref{eq:diff1} we use \eqref{eq:udis2} 
instead of \eqref{eq:udis1}. 
This in fact gives the second result and completes the proof.

\section{Proof of Theorem~\ref{thm:nondom}}
\label{sec:two}

Under the assumptions, we must have $m \ge 3$. 
Let $\al_1,\al_2,\ldots,\al_m$ be the roots of $f$ such that $|\al_1|=|\al_2|>|\al_j|$ for any $3\le j \le m$. 
Note that they are also the roots of $f^*$. 
By \eqref{eq:udis1} and the definition of $J(f^*)$, for any $3\le j \le m$ we have
$$
|\al_1|-|\al_j| > J(f^*). 
$$
Note that for any integer $n\ge 0$, $u_n$ can be expressed as
$$
u_n=\sum_{j=1}^{m}b_j\alpha_j^n,
$$
where $b_1,\ldots,b_m$ are all non-zero complex numbers. 

In the sequel, we want to find a lower bound beyond which the index $n$ satisfies  
\begin{equation}
\label{eq:b12a}
|b_1\al_1^n+b_2\al_2^n|> \sum_{j=3}^{m}|b_j\alpha_j^n|.
\end{equation}
So, whenever the index $n$ is greater than this lower bound, we have $u_n \ne 0$. This will complete the proof. 

The key step is to get a lower bound for the left-hand side of \eqref{eq:b12a} by using Baker's inequality on linear form \eqref{Baker}. 
Then, let the right-hand side of \eqref{eq:b12a} be less than the lower bound, this can give the desired lower bound for the index $n$. 

For any $n\ge 0$, we have
\begin{equation}
\label{eq:initial}
|b_1\al_1^n+b_2\al_2^n|
=|b_1\al_1^n| \cdot \left|(-1)\cdot \frac{b_2}{b_1}\cdot (\frac{\al_2}{\al_1})^n-1 \right| 
\end{equation}
Here, for $n\ge 0$ we put
$$
\Delta_n = (-1)\cdot \frac{b_2}{b_1}\cdot (\frac{\al_2}{\al_1})^n-1,
$$
and 
\begin{equation}
\label{eq:LaDe}
\Lambda_n = \log (\Delta_n +1). 
\end{equation}
Then, by definition, there exists an  integer $a$ such that 
$$
|a| \le n+2
$$ 
and 
$$
\Lambda_n = a \log (-1) + \log (b_2/b_1) + n \log (\al_2/\al_1), 
$$
which gives a linear form in the logarithms of algebraic numbers. 

In the following, we assume that 
\begin{equation}
\label{eq:1/2}
|\Delta_n| \le 1/2.
\end{equation}
If this is not true, then later on one can see that this implies much better results; see \eqref{eq:b1b2}.

Notice that for any complex number $z$ with $0<|z| \le r <1$, using the Taylor expansion, we have 
\begin{align*}
|\log (1+z)| & = |z-\frac{z^2}{2}+\frac{z^3}{3}- \cdots | \\
& \le (1+\frac{r}{2}+\frac{r^2}{3}+\cdots ) |z| =\frac{|\log (1-r)|}{r}|z|.
\end{align*}
 Using this estimate together with \eqref{eq:LaDe} and \eqref{eq:1/2}, we obtain
\begin{equation}
\label{eq:tran}
\frac{1}{2} |\Lambda_n | = \frac{1}{2} |\log (\Delta_n + 1) | < |\Delta_n | .
\end{equation}

We first handle the exceptional case when $\Lambda_n = 0$.  
Suppose that $\Lambda_n = 0$. Then $\Delta_n=0$, that is $b_1\al_1^n+b_2\al_2^n=0$.
Let
$$
K=\Q(u_0,\ldots,u_{m-1},\al_1,\ldots,\al_m).
$$
 Then, $[K:\Q] \le m!\cdot dD$. If $\al_1/\al_2$ is not a unit of $K$, then there exists a prime ideal $\p$ in the ring of integers of $K$ such that  $\ord_{\p} (\al_1/\al_2)$ is non-zero. Then, we get
 \begin{equation}
 \label{eq:ord1}
 n \le n | \ord_{\p} (\al_1/\al_2) | = |\ord_{\p} (b_2/b_1)|\le
 |\ord_{\p} (b_1)| + |\ord_{\p} (b_2)|.
 \end{equation}
 On the other hand, by definition, for any $1\le j \le m$, we know that
 $$
 |b_j|_\p=p^{-\ord_{\p}(b_j)/e_\p},
 $$
 where $p$ is the underlying prime number of $\p$, and $e_\p$ is the ramification index of $\p$ over $p$. 
 Noticing $b_j \in K$ and using \eqref{Liouville}, we obtain
 $$
 | \log |b_j|_{\p} | \le \frac{[K:\Q]}{d_\p}\h(b_j), 
 $$
 where $d_\p=[K_\p : \Q_p]$. Notice that $e_\p \le d_\p$. 
 So, for any $1\le j \le m$ we have
 \begin{equation}
 \label{eq:ord2}
| \ord_{\p}(b_j) | \le \frac{e_{\p}[K:\Q]}{d_\p \log p} \h(b_j) \le
 \frac{[K:\Q]}{ \log p} \h(b_j) \le 2B(u),
 \end{equation}
  where we use Lemma~\ref{lem:hbj} and $B(u)$ has been defined in Theorem~\ref{thm:dom}.
 Combining \eqref{eq:ord1} with \eqref{eq:ord2}, we get
 \begin{equation}
 \label{eq:n01}
  n\le 4B(u).
 \end{equation}

 Now, we suppose that $\Lambda_n = 0$ and $\al_1/\al_2$ is a unit of $K$.  
 Since $\al_1/\al_2$ is not a root of unity by assumption,
 there exists an embedding $\sigma: K \hookrightarrow \C$ such that $|\sigma(\al_1)/\sigma(\al_2)|>1$. 
 By \eqref{eq:udis1} and the definition of $J(f^*)$, we have
 $$
 |\sigma(\al_1)|-|\sigma(\al_2)| > J(f^*).
 $$
 On the other hand, let $G$ be the Galois group of the Galois closure of the field $\Q(a_0,a_1,\ldots, a_{m-1})$. 
 Then, each $\sigma(\al_j), j=1,2, \ldots, m$, is a root of the polynomial $\prod_{\tau \in G}\tau(f^*) \in \Z[X]$, 
 and so, by Lemma \ref{lem:Hg} and the definition of $I(f^*)$, we obtain 
 \begin{equation}
 \label{eq:sig-al}
 |\sigma(\al_j)| \le M\Big( \prod_{\tau \in G}\tau(f^*) \Big) \le I(f^*). 
 \end{equation}
 Notice that
$$
|\sigma(b_2)/\sigma(b_1)| \le \exp (2B(u)),
$$
which can be deduced similarly as \eqref{eq:|bj|}. In view of
$$
\sigma(b_1)\sigma(\al_1)^n+\sigma(b_2)\sigma(\al_2)^n=0,
$$
we deduce that
\begin{equation}
\label{eq:sigma}
|\sigma(\al_1)/\sigma(\al_2)|^n = |\sigma(b_2)/\sigma(b_1)| \le \exp (2B(u)).
\end{equation}

On the other hand, since
$$
|\sigma(\al_1)/\sigma(\al_2)|^n > \big( 1+J(f^*)/|\sigma(\al_2)| \big)^n
\ge (1+J(f^*)/I(f^*))^n,
$$
where the last inequality follows from \eqref{eq:sig-al}, we consider the inequality
$$
(1+J(f^*)/I(f^*))^n > \exp (2B(u)),
$$
which gives
$$
n > \frac{2B(u)}{\log (1+J(f^*)/I(f^*))}.
$$
Since $\log (1+x) > x -x^2/2 > x/2$ for $0< x <1$, it suffices to require that
\begin{equation}
\label{eq:n02}
n > 4B(u)I(f^*)J(f^*)^{-1}.
\end{equation}

Notice that the lower bound in \eqref{eq:n02} is much larger than the upper bound in \eqref{eq:n01}. Thus, if integer $n$ satisfies \eqref{eq:n02}, then the inequality in \eqref{eq:sigma} is not true, and we must have $\Lambda_n \ne 0$.

Now, we assume that $n$ satisfies \eqref{eq:n02}. So, $\Lambda_n \ne 0$. 
Applying Baker's inequality \eqref{Baker} to $\Lambda_n$, we find that
\begin{equation}
\label{eq:Lam}
|\Lambda_n | > \exp \big( -2^{38}D_1^2A_1A_2A_3 \log (eD_1) \log (en+2e) \big),
\end{equation}
where $D_1$ is the degree of the number field generated by $b_2/b_1$ and $\al_2/\al_1$ over $\Q$, and
\begin{align*}
& A_1 = \pi, \\
& A_2 \ge \max \{ D_1\h(b_2/b_1), |\log (b_2/b_1)|, 0.16 \} , \\
& A_3 \ge \max \{ D_1\h(\al_2/\al_1), |\log (\al_2/\al_1)|, 0.16 \}.
\end{align*}

 Since both $b_2/b_1$ and $\al_2/\al_1$ are contained in $K$, we have
$$
D_1 \le [K : \Q] \le m!\cdot dD.
$$

By Lemma~\ref{lem:hbj} and the definition of $B(u)$, we get
$$
D_1\h(b_2/b_1) \le D_1 (\h(b_1)+\h(b_2)) \le 2B(u).
$$
 In addition, by \eqref{Liouville} we note that
$$
| \log (b_2/b_1) | \le |\log | b_2/b_1 || + \pi \le D_1 \h(b_2/b_1)+ \pi \le  2B(u)+\pi.
$$
Thus, we choose 
$$
A_2 = 2B(u)+\pi. 
$$

Now, we want to choose $A_3$. 
Since $\al_1, \al_2$ are roots of the polynomial  $\prod_{\tau \in G} \tau(f^*) \in \Z[X]$, by \eqref{Mahler-Weil} and Lemma~\ref{lem:Hg} we have 
\begin{align*}
\h(\al_2/\al_1) \le \h(\al_2) + \h(\al_1)
 \le 2\log M\Big( \prod_{\tau \in G} \tau(f^*) \Big) \le 2\log I(f^*). 
\end{align*}
On the other hand, we have
$$
|\log (\al_2/\al_1) | \le |\log |\al_2/\al_1|| + \pi = \pi.
$$
So, we can choose
$$
A_3= 2\log I(f^*).
$$

Then, under \eqref{eq:n02} and recalling the definition of $C(u)$, the inequality \eqref{eq:Lam} becomes
\begin{equation*}
|\Lambda_n | > \exp(-C(u)\log n),
\end{equation*}
which, together with \eqref{eq:initial} and \eqref{eq:tran}, implies that
\begin{equation}
\label{eq:b1b2}
|b_1\al_1^n+b_2\al_2^n| > \frac{1}{2}|b_1\al_1^n| \exp(-C(u)\log n).
\end{equation}

Now, we are ready to find a lower bound for $n$ such that
$$
|b_1\al_1^n+b_2\al_2^n| > \sum_{j=3}^{m}|b_j\alpha_j^n|.
$$
This is implied in the following inequality by using \eqref{eq:b1b2}
$$
\frac{1}{2}|b_1\al_1^n| \exp(-C(u)\log n) \ge  \sum_{j=3}^{m}|b_j\alpha_j^n|.
$$
That is, we need that
$$
|b_1| \exp(-C(u)\log n) \ge 2\sum_{j=3}^{m}|b_j|\left(|\alpha_j|/|\al_1|\right)^n,
$$
which follows from the inequality
\begin{equation}
\label{eq:b12}
|b_1| \exp(-C(u)\log n) \ge 2\sum_{j=3}^{m}|b_j|\left(1-J(f^*)/|\al_1|\right)^n.
\end{equation}

By \eqref{eq:|bj|}, the inequality \eqref{eq:b12} is implied in the following inequality
$$
\exp(-B(u)-C(u)\log n) \ge 2m \exp(B(u)) \left(1-J(f^*)/|\al_1|\right)^n,
$$
which is equivalent to
$$
-n\log (1-J(f^*)/|\al_1|) - C(u)\log n \ge 2B(u)+ \log (2m).
$$
Since $-\log (1-x)=x+x^2/2+x^3/3+ \cdots$ for $0<x<1$ and noticing $|\al_1| \le M\Big( \prod_{\tau \in G}\tau(f^*) \Big) \le I(f^*)$, it suffices to require that
\begin{equation}
\label{eq:n1}
nJ(f^*)/I(f^*)-C(u)\log n \ge 2B(u)+ \log (2m).
\end{equation}

Notice that an integer $n$ satisfying the following inequalities
also satisfies \eqref{eq:n1},
\begin{equation}
\label{eq:n2}
\left\{\begin{array}{ll}
 C(u)\log n \le  nJ(f^*)/(2I(f^*)),\\
 \\
 nJ(f^*)/(2I(f^*)) \ge 2B(u)+ \log (2m).\\
\end{array}\right.
\end{equation}

Since the function $x/\log x$ is strictly increasing when $x\ge 3$, for $A\ge 3$, if $x\ge 2A\log A$, then $x/\log x \ge A$. 
Thus, if
\begin{equation}
\label{eq:n3}
n \ge 4C(u)I(f^*)J(f^*)^{-1} \log \big( 2C(u)I(f^*)J(f^*)^{-1}\big),
\end{equation}
then the first inequality in \eqref{eq:n2} holds, and in fact the second one also holds. 
Note that the lower bound in \eqref{eq:n3} is much bigger than that in \eqref{eq:n02}.

 So, if an integer $n$ satisfies the
inequality \eqref{eq:n3}, then we have 
$$
|b_1\al_1^n+b_2\al_2^n| > \sum_{j=3}^{m}|b_j\alpha_j^n|.
$$
Thus, $u_n\ne 0$. This completes the proof of the theorem.

\section*{Acknowledgements}

The author wants to thank Manas Patra and Igor Shparlinski for introducing him into the Skolem Problem and also for useful discussions and helpful comments.  
The research was supported by the Australian Research Council Grant DP130100237 and also by the Macquarie University Research Fellowship.

\end{document}